\theoremstyle{plain}
\newtheorem{thm}{Theorem}[section]
\newtheorem{prop}[thm]{Proposition}
\newtheorem{cor}[thm]{Corollary}
\newtheorem{lemma}[thm]{Lemma}
\newtheorem{prob}[thm]{Problem}
\theoremstyle{definition}
\begin{document}

\title[On the mixed derivatives of a separately twice differentiable function]
{On the mixed derivatives of a separately twice differentiable function}

\author{V.~Mykhaylyuk}

\address{Department of Mathematics and Informatics, Chernivtsi National University,
Chernivtsi, Ukraine}

\email{vmykhaylyuk@ukr.net}


\keywords{Mixed derivative; differentiability; integrability; measurability;
Fourier series}

\subjclass[2010]{Primary 26B30; Secondary 01A75}

\begin{abstract} WE prove that a function $f(x,y)$ of
real variables defined on a rectangle, having square integrable partial derivatives
$f''_{xx}$ and $f''_{yy}$, has almost everywhere mixed derivatives $f''_{xy}$
and $f''_{yx}$.
\end{abstract}
\maketitle

\section{Introduction}

In the well known ``Scottish Book'' \cite{Mauldin} S.~Mazur posed the
following question (VII.1935, Problem 66):

{\it The real function $z=f(x,y)$ of real variables $x,y$ possesses the
1st partial derivatives $f'_x$, $f'_y$ and the pure second partial derivatives
$f''_{xx}$, $f''_{yy}$. Do there exist then almost everywhere the mixed 2nd partial
derivatives $f''_{xy}$, $f''_{yx}$? According to a remark by p. Schauder, this
theorem is true with the following additional assumptions: The derivatives
$f'_x$, $f'_y$ are absolutely continuous in the sense of Tonelli, and the derivatives
$f''_{xx}$, $f''_{yy}$ are square integrable. An analogous question for
$n$ variables.}

The existence and measurability of (mixed) partial derivatives were investigated by many mathematicians (see \cite[Th. 79-81]{Bernstein0}, \cite{H-D}, \cite{Cur}, \cite{Tolstov}, \cite{Tolstov1}, \cite{Mitiagin}, \cite{Bugrov}, \cite{Ser}, \cite{MM}, \cite{Min} and the literature given there). Mainly, these results give some sufficient conditions for existence (and equality) almost everywhere of mixed second partial derivatives, but they do not give any answer to the Mazur problem.
In particular, G.~Tolstov in \cite{Tolstov1} proved the following result (see also \cite[Lemma 4]{Min}).

\begin{prop} \label{pr:1.1}
Let $h:[0,1]^2\to\mathbb R$ be an integrable function and
$$
f(x,y)=\int_0^xdx\int_0^yh(u,v)dudv.
$$
Then there exists a measurable set $A\subseteq [0,1]$ with $\mu(A)=1$ such that
$$
f'_x(x_0,y)=\int_0^yh(u,v)dv
$$
for every $x_0\in A$ and $y\in[0,1]$.
\end{prop}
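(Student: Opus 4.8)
The plan is to pass to the difference quotient of $f$ in the variable $x$ and reduce everything to the one-dimensional Lebesgue differentiation theorem; the one genuinely delicate point is that the exceptional null set of $x_0$ must be chosen once and for all, independently of $y$. First I would set $H(u,y)=\int_0^y h(u,v)\,dv$. By Tonelli's theorem $h(u,\cdot)\in L^1[0,1]$ for almost every $u$, and for such $u$ the function $H(u,\cdot)$ is continuous (indeed absolutely continuous) and splits as $H=H^{+}-H^{-}$, where $H^{\pm}(u,y)=\int_0^y h^{\pm}(u,v)\,dv$ are nondecreasing and continuous in $y$. Since $f(x,y)=\int_0^x H(u,y)\,du$, the difference quotient in $x$ is $\phi_t(y):=\frac{f(x_0+t,y)-f(x_0,y)}{t}=\frac1t\int_{x_0}^{x_0+t}H(u,y)\,du$, and the assertion becomes: there is a set $A$ of full measure such that $\phi_t(y)\to H(x_0,y)$ as $t\to 0$ for every $x_0\in A$ and every $y\in[0,1]$.

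For each fixed $y$ the one-dimensional Lebesgue differentiation theorem, applied to the integrable function $u\mapsto H(u,y)$, already gives $\phi_t(y)\to H(x_0,y)$ for almost all $x_0$; the trouble is that this null set depends on $y$. To remove the dependence I would enumerate a countable dense set $Q=(\mathbb Q\cap[0,1])\cup\{0,1\}=\{y_n\}$, apply the theorem to each of the functions $u\mapsto H^{\pm}(u,y_n)$, and let $A$ be the intersection of the resulting countably many full-measure sets together with the full-measure set on which $h(u,\cdot)\in L^1$. Then for every $x_0\in A$ one has $\phi_t^{\pm}(y_n)\to H^{\pm}(x_0,y_n)$ for all $n$, where $\phi_t^{\pm}(y)=\frac1t\int_{x_0}^{x_0+t}H^{\pm}(u,y)\,du$.

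The main obstacle is thus reduced to upgrading convergence from the dense set $Q$ to every $y$, and here I would use monotonicity. For $x_0\in A$ each $\phi_t^{\pm}(\cdot)$ is nondecreasing in $y$, being an average over an interval of the nondecreasing functions $H^{\pm}(u,\cdot)$, and it converges on $Q$ to the continuous limit $H^{\pm}(x_0,\cdot)$. Given $y$ and $\varepsilon>0$, continuity lets me pick $q_1\le y\le q_2$ in $Q$ with $H^{\pm}(x_0,q_2)-H^{\pm}(x_0,q_1)<\varepsilon$; the sandwich $\phi_t^{\pm}(q_1)\le\phi_t^{\pm}(y)\le\phi_t^{\pm}(q_2)$ together with convergence at $q_1,q_2$ forces $\phi_t^{\pm}(y)\to H^{\pm}(x_0,y)$. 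Subtracting the two parts yields $\phi_t(y)\to H(x_0,y)=\int_0^y h(x_0,v)\,dv$ for every $y$, and since the argument is valid for $t$ of either sign this gives the two-sided derivative $f'_x(x_0,y)$, as required. A more conceptual alternative, which I would also mention, is to view $x\mapsto h(x,\cdot)$ as an $L^1[0,1]$-valued Bochner-integrable map: at each of its Lebesgue points $x_0$ one has $\sup_y|\phi_t(y)-H(x_0,y)|\le\frac1t\int_{x_0}^{x_0+t}\|h(u,\cdot)-h(x_0,\cdot)\|_{L^1}\,du\to 0$, which gives the conclusion uniformly in $y$ and bypasses the monotone splitting entirely.
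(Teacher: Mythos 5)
Your argument is correct. Note that the paper does not prove Proposition \ref{pr:1.1} at all: it is quoted as a known result of Tolstov (with a pointer to \cite{Tolstov1} and to Lemma 4 of \cite{Min}), so there is no in-paper proof to compare against. Your write-up is essentially the classical argument for this lemma: the only real difficulty is making the exceptional null set independent of $y$, and you handle it correctly by splitting $h$ into $h^{+}-h^{-}$, applying the one-dimensional Lebesgue differentiation theorem at a countable dense set of $y$'s, and then using monotonicity of $y\mapsto\int_0^y h^{\pm}(u,v)\,dv$ together with continuity of the limit to upgrade to all $y$ (your inclusion of the full-measure set where $h(x_0,\cdot)\in L^1$, which guarantees that continuity, is exactly the point that must not be skipped). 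The alternative you sketch via Lebesgue points of the Bochner-integrable map $x\mapsto h(x,\cdot)\in L^1[0,1]$ is also valid and even gives the convergence of the difference quotients uniformly in $y$.
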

Using this statement G.~Tolstov proved that if a separately differentiable function $f:[0,1]^2\to\mathbb R$ has a jointly continuous $f'_x$ and there exists $f''_{xy}$ on a set $D\subseteq [0,1]^2$ of the second category, then there exists a rectangle $P\subseteq [0,1]^2$ such that $f''_{xy}=f''_{yx}$ almost everywhere on $P$. This result was developed in \cite[Theorem 7]{Min}. Moreover, in \cite[Theorem 7]{Tolstov} G.~Tolstov proved the following theorem.

\begin{thm}\label{th:1.2}
Let $f:[0,1]^2\to\mathbb R$ and $f'_{x}$,
$\,f'_{y}$ have all finite derivative numbers with respect to each variable
on some subset $E\subseteq P$ of positive measure. Then a.e. on $E$ there
exist and are equal the mixed derivatives $f''_{xy}$ and $f''_{yx}$.
\end{thm}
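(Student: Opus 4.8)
The plan is to split the statement into its two halves---a.e.\ \emph{existence} on $E$ of all four second partials, and a.e.\ \emph{equality} of the two mixed ones---and to attack them with different tools. Write $p=f'_x$ and $q=f'_y$; these are everywhere defined by hypothesis. The existence half rests on the one-variable Denjoy--Young--Saks theorem: at almost every point of a line, an arbitrary real function is either differentiable or has a prescribed pair of infinite derivate numbers. Since the assumption forces all four Dini derivates of each of the sections $y\mapsto p(x,y)$, $x\mapsto p(x,y)$, $x\mapsto q(x,y)$, $y\mapsto q(x,y)$ to be finite at every point of $E$, the infinite alternatives are excluded along almost every section meeting $E$, so $p$ is differentiable in $y$ and in $x$, and $q$ in $x$ and in $y$, a.e.\ there. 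Because $f$ is separately continuous (being separately differentiable), $p$ and $q$ are measurable as pointwise limits of the measurable difference quotients, and Fubini's theorem promotes the sectional statements to: $f''_{xy}=\partial_y p$, $f''_{yx}=\partial_x q$, $f''_{xx}=\partial_x p$, $f''_{yy}=\partial_y q$ exist at almost every point of $E$.

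To prepare the equality, I would linearize the Dini hypothesis. For $(x,y)\in E$ there is an integer $n$ with $|p(x',y)-p(x,y)|\le n|x'-x|$, and the three analogous estimates for $p$ in $y$ and for $q$ in both variables, whenever the increment is smaller than $1/n$. Grouping by $n$ writes $E=\bigcup_n E_n$ with a uniform constant, and on $E_n$ the functions $p$ and $q$ are bounded along the short horizontal and vertical segments issuing from points of $E_n$. By inner regularity I would pass to a closed subset $F\subseteq E_n$ with $\mu(E_n\setminus F)$ arbitrarily small and to a square of side less than $1/n$; it then suffices to prove $f''_{xy}=f''_{yx}$ a.e.\ on each such $F$.

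The equality itself I would derive from an integral identity. On a rectangle $Q=[a,b]\times[c,d]$ along whose sections $p$ is absolutely continuous in $y$ and $q$ is absolutely continuous in $x$, two applications of the fundamental theorem of calculus together with Fubini give
\[
\iint_Q f''_{xy}\,dx\,dy=\int_a^b\big(p(x,d)-p(x,c)\big)\,dx=f(b,d)-f(a,d)-f(b,c)+f(a,c),
\]
and by symmetry $\iint_Q f''_{yx}\,dx\,dy$ equals the same second difference of $f$ (this is the reverse of the recovery of $f'_x$ from a double integral recorded in Proposition~\ref{pr:1.1}). Hence $\iint_Q f''_{xy}=\iint_Q f''_{yx}$ for every admissible $Q$, and the Lebesgue differentiation theorem forces $f''_{xy}=f''_{yx}$ almost everywhere on $Q$.

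The genuine obstacle is that $E$ and $F$ are only measurable sets, whereas the identity above is intrinsically a statement about rectangles: the steps $\int_c^d\partial_y p\,dy=p(x,d)-p(x,c)$ and $\int_a^b p(x,d)\,dx=f(b,d)-f(a,d)$ demand absolute continuity along \emph{entire} segments, yet the mixed derivatives need not even exist off $E$, and the bounds of the previous step control $p,q$ only between points of $F$. I therefore expect the crux to be a density-point localization: at a density point of $F$ one must show that the part of a small square lying outside $F$ contributes $o(\mu(Q))$ to each of the two averages---using the decomposition into the sets $E_n$ to bound these exterior contributions---while simultaneously securing the sectional absolute continuity that the Fubini step consumes. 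This interplay between the measure geometry of $F$ and the one-dimensional absolute continuity is, I expect, the technical heart of Theorem~\ref{th:1.2}.
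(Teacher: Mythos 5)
First, note that the paper offers no proof of this statement: Theorem \ref{th:1.2} is quoted from Tolstov \cite{Tolstov} (his Theorem 7), so there is no in-paper argument to compare yours against line by line; the proposal has to stand on its own, and it does not yet. The first genuine gap is the Fubini step in the existence half. The Denjoy--Young--Saks theorem does give you, for each fixed $x$, differentiability of $y\mapsto f'_x(x,y)$ at almost every point of the section $E_x$; but to conclude ``a.e.\ on $E$'' you must know that the planar set of non-existence is contained in a null set, and a set all of whose vertical sections are null can have positive outer measure when it is non-measurable. This is not a pedantic worry in this context: the paper itself recalls Serrin's example \cite{Ser} of a measurable function that is a.e.\ differentiable on every horizontal segment while the set of existence of the partial derivative is non-measurable. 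The standard repair --- and the reason the finiteness of \emph{all four} derivates is assumed rather than mere sectional differentiability --- is to pass to your sets $E_n$, extend the restriction of $f'_x(x,\cdot)$ to $E_{n,x}$ to a function Lipschitz of constant $n$ by an explicit (hence jointly measurable) formula such as McShane's, apply Lebesgue's differentiation theorem to the extension, and identify its derivative with $f''_{xy}$ at density points of $E_{n,x}$. You introduce the $E_n$ but use them only for boundedness, never for the Lipschitz extension, and you do not address whether the $E_n$ are measurable in the first place (their defining condition quantifies over uncountably many $x'$, and $f'_x(\cdot,y)$ is only of Baire class 1).

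Second, the equality half is not a proof but a declaration of where the proof should be. Your integral identity over a rectangle $Q$ requires $f'_x(x,\cdot)$ to be absolutely continuous on entire vertical segments, which the hypotheses give you on no segment at all: they control the derivates only at points of $E$, and off $E$ the function need not even be of bounded variation on a section. You correctly identify that the argument must be localized to density points of $F$, that the contribution of $Q\setminus F$ to each average must be shown to be $o(\mu(Q))$, and that this interplay between the planar density of $F$ and one-dimensional absolute continuity is the technical heart of the theorem --- and that is exactly the part you have not supplied. As it stands, the proposal assembles the right classical ingredients (Denjoy--Young--Saks, a Lipschitz-type decomposition, an integral identity combined with Lebesgue differentiation) but leaves open both the measurability issue and the central localization argument, so it cannot be accepted as a proof.
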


On the other hand, there are some interesting examples among the above mentioned results. G.~Tolstov was constructed in \cite{Tolstov1} a function $f:[0,1]^2\to\mathbb R$ having jointly continuous first partial derivatives and mixed second partial
derivatives  which are different on a set of positive measure. Moreover, J.Serrin was constructed in \cite{Ser} a measurable function $f:[0,1]^2\to\mathbb R$
which is a.e. differentiable on each horizontal segment as a function of one variable, but for which the set of the existence of partial derivative
with respect to the first variable is non-measurable.

The Mazur problem was solved in the negative in \cite{MP}. It was constructed in \cite{MP} a separately twice differentiable
function $f:[0,1]^2\to\mathbb R$, whose partial derivative $f'_x$ is discontinuous with respect to the second
variable on a set $A\times B\subseteq [0,1]^2$ with $\mu(A)=1$ and $\mu(B)>0$. This example shows that for a separately twice differentiable
function $f:[0,1]^2\to\mathbb R$ the continuity of $f'_x$ with respect to the second variable plays an important role for the existence of $f''_{xx}$.

Note that the second partial derivatives $f''_{xx}$, $f''_{yy}$ of the function $f$ from \cite{MP} are not integrable.
Thus the following question naturally arises in the connection with Schauder's remark to the Mazur problem and the example from \cite{MP}.

\begin{prob}\label{prob:1.1}
Let $f:[0,1]^2\to\mathbb R$ be a separately twice differentiable function and
its second derivatives $f''_{xx}$ and $f''_{yy}$ are square integrable.
\begin{enumerate}
\item[$(i)$] Does there exist a set $A\subseteq[0,1]$ with $\mu(A)=1$ such
that $f'_x$ is continuous with respect to $y$ at each point
of $A\times[0,1]$?
\item[$(ii)$] Do there exist almost everywhere mixed derivatives $f''_{xy}$ and $f''_{yx}$?
\end{enumerate}
\end{prob}

In this paper we give the positive answer to Problem \ref{prob:1.1}. In Section 2 (Corollary \ref{pr:3.3}) we give some sufficient conditions on a function $f(x,y)$ for the jointly continuity of the first partial derivative $f'_x$. In Section 3 we prove an auxiliary statement (Proposition \ref{pr:4.3}) on the consistency of the Fourier series of a function $f$ and its partial derivative $f'_x$, which we use in Section 4 for the proof of the main result of the paper (Theorem \ref{th:4.51}). Finally, in Section 5 we give two examples which show the essentiality of some assumptions in Corollary \ref{pr:3.3} and Theorem \ref{th:4.51} and formulate open questions.

\section{Jointly continuity of the first partial derivative}

\begin{lemma} \label{lem:3.1}
Let $Y\subseteq\mathbb{R}$, a function $f:[0,1]\times Y\to\mathbb R$ be
continuous with respect to $y$ and $f'_x$ be (uniformly) continuous with
respect to $x$, uniformly on $y$. Then $f'_x$ is jointly continuous.
\end{lemma}
\begin{proof}
Fix $x_0\in [0,1]\,$, $\,y_0\in Y$ and $\varepsilon>0$. Choose a neighborhood
$U=[x_1,x_2]$ of $x_0$ such that for all $ x_,x'\in U$ and all $y$
$$|f'_x(x,y)-f'_x(x',y)|<\frac{\varepsilon}{4}\,.$$

Using the continuity of $f$ with respect to $y$, choose a neighborhood $V$ of
$y_0$ such that for all $y\in V$
$$\left|\frac{f(x_2,y_0)-f(x_1,y_0)}{x_2-x_1}-\frac{f(x_2,y)-f(x_1,y)}{x_2-x_1}\right|<\frac{\varepsilon}{2}\,.$$
By the Lagrange theorem, for every $y\in V$ there is
$x_y\in U$ such that
$$f'_x(x_y,y)=\frac{f(x_2,y)-f(x_1,y)}{x_2-x_1}.$$

Hence, for each $y\in V$
$$|f'_x(x_{y_0},y_0)-f'_x(x_{y},y)|<\frac{\varepsilon}{2}\,.$$

Therefore, for an arbitrary $(x,y)\in U\times V$
$$|f'_x(x_0,y_0)-f'_x(x,y)|\leq $$
$$|f'_x(x_0,y_0)-f'_x(x_{y_0},y_0)|+|f'_x(x_{y_0},y_0)-f'_x(x_{y},y)|+
|f'_x(x_y,y)-f'_x(x,y)|<$$
$$\frac{\varepsilon}{4}+\frac{\varepsilon}{4}+\frac{\varepsilon}{2}=\varepsilon.$$
\end{proof}
\begin{lemma} \label{lem:3.2}
Let $g:[0,1]\to\mathbb{R}\,$ and $\;\int_0^1 (g')^2dx\le c$.
Then the function $g$ is H\"older of the order $\frac{1}{2}$ and constant $\sqrt{c}$.
\end{lemma}

\begin{proof}
For all $0\le x_0<x_1\le 1$
$$|g(x_1)-g(x_0)|=\left|\int_{x_0}^{x_1} g'dx\right|\leq
\left((x_1-x_0)\int_{x_0}^{x_1}(g')^2dx\right)^{\frac{1}{2}}\leq
\sqrt{c(x_1-x_0)}\,.$$
\end{proof}

The next theorem is a simple combination of the previous two lemmas.

\begin{cor} \label{pr:3.3}
Let $Y\subseteq\mathbb{R}$, a function $f:[0,1]\times Y\to\mathbb R$ be
continuous with respect to $y$, there exists $f''_{xx}$ and
$$\sup_{y\in Y}\int_0^1 (f''_{xx}(x,y))^2dx<\infty.$$

Then $f'_x$ is jointly continuous.
\end{cor}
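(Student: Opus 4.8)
The plan is to apply Lemma~\ref{lem:3.1}, whose hypotheses are precisely the continuity of $f$ in $y$ (given) together with uniform-in-$y$ uniform continuity of $f'_x$ in $x$. So the entire task reduces to deriving the latter from the uniform $L^2$ bound on $f''_{xx}$.

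First I would fix $y\in Y$ and set $g(x)=f'_x(x,y)$, viewed as a function of $x$ alone. By hypothesis $g$ is differentiable with $g'(x)=f''_{xx}(x,y)$, and the assumption supplies $\int_0^1(g')^2\,dx\le c$ for the single constant $c=\sup_{y\in Y}\int_0^1(f''_{xx}(x,y))^2\,dx<\infty$. Crucially this $c$ does not depend on $y$. Applying Lemma~\ref{lem:3.2} to $g$ then yields, for all $0\le x_0<x_1\le 1$,
$$|f'_x(x_1,y)-f'_x(x_0,y)|\le\sqrt{c\,(x_1-x_0)}.$$

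Since this Hölder estimate holds with the same constant $\sqrt{c}$ for every $y\in Y$, it gives exactly the uniform (in $y$) modulus of continuity of $f'_x$ in the variable $x$: given $\varepsilon>0$, the choice $\delta=\varepsilon^2/c$ makes $|f'_x(x_1,y)-f'_x(x_0,y)|<\varepsilon$ whenever $|x_1-x_0|<\delta$, independently of $y$. Thus $f'_x$ is uniformly continuous with respect to $x$, uniformly on $y$, which is the remaining hypothesis of Lemma~\ref{lem:3.1}.

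The two hypotheses of Lemma~\ref{lem:3.1} are now met, so its conclusion gives the joint continuity of $f'_x$, completing the proof. There is no real obstacle here: the only point requiring a moment's care is confirming that the constant furnished by Lemma~\ref{lem:3.2} is genuinely independent of $y$, which is guaranteed by taking the supremum over $Y$ in the hypothesis. This is why the statement is, as the text says, ``a simple combination of the previous two lemmas.''
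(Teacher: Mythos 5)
Your proposal is correct and is exactly the argument the paper intends: the text calls the corollary ``a simple combination of the previous two lemmas,'' and your derivation of the uniform-in-$y$ H\"older estimate from Lemma~\ref{lem:3.2} followed by an application of Lemma~\ref{lem:3.1} is precisely that combination. No further comment is needed.
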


\section{Square integrable partial derivatives}

\begin{lemma} \label{pr:4.1}
Let $\int_0^{2\pi}g^2(x)dx<\infty\,$, $\,\int_0^{2\pi}g(x)dx=0$ and
$f(x)=a+\int_0^{x}g(t)d\mu$ $($in particular let $f$ be a
differentiable function such that $f(0)=f(2\pi)$ and the
derivative $g=f'$ be absolutely integrable on $[0,2\pi]$$)$. Then for every $n\in\mathbb N$
$$\int_0^{2\pi}f(x)\cos nxdx=-n\int_0^{2\pi}g(x)\sin nxdx$$ and
$$\int_0^{2\pi}f(x)\sin nxdx=n\int_0^{2\pi}g(x)\cos nxdx.$$
\end{lemma}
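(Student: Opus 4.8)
The plan is to prove each identity by integrating by parts over $[0,2\pi]$, exploiting that $f(x)=a+\int_0^x g\,d\mu$ is absolutely continuous with $f'=g$ almost everywhere. First I would check that everything is well posed: since $g\in L^2[0,2\pi]$ and $[0,2\pi]$ has finite measure, Cauchy--Schwarz gives $g\in L^1[0,2\pi]$, so the integrals $\int_0^{2\pi}g\cos nx\,dx$ and $\int_0^{2\pi}g\sin nx\,dx$ converge, and the integration-by-parts formula for a pair of absolutely continuous functions (here $f$ and the smooth factors $\cos nx$, $\sin nx$) is legitimate.

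For the cosine integral I would put $u=f$, $dv=\cos nx\,dx$, so $v=\frac{\sin nx}{n}$ and $du=g\,dx$. The boundary term $\bigl[f(x)\tfrac{\sin nx}{n}\bigr]_0^{2\pi}$ vanishes automatically, since $\sin 2\pi n=\sin 0=0$, and one is left with
$$\int_0^{2\pi}f(x)\cos nx\,dx=-\frac1n\int_0^{2\pi}g(x)\sin nx\,dx.$$
For the sine integral I would put $v=-\frac{\cos nx}{n}$; this time the boundary term equals $\bigl[-f(x)\tfrac{\cos nx}{n}\bigr]_0^{2\pi}=\frac{f(0)-f(2\pi)}{n}$, and here the hypothesis $\int_0^{2\pi}g\,dx=0$ is exactly what is needed: it yields $f(2\pi)=a+\int_0^{2\pi}g\,dx=a=f(0)$, so the boundary term drops out and
$$\int_0^{2\pi}f(x)\sin nx\,dx=\frac1n\int_0^{2\pi}g(x)\cos nx\,dx.$$

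I would flag explicitly what the computation produces, because it is the delicate point here. Integration by parts introduces the factor $\frac1n$, not the factor $n$ written in the two displayed formulas; this is just the standard rule that passing from $g$ to its antiderivative $f$ divides the $n$-th Fourier coefficient by (a multiple of) $n$, and it is borne out by the test case $g(x)=\cos 2x$, $f(x)=\tfrac12\sin 2x$, for which $\int_0^{2\pi}f\sin 2x\,dx=\tfrac{\pi}{2}$ while $\int_0^{2\pi}g\cos 2x\,dx=\pi$. Hence the identities hold with the stated $n$ replaced by $\frac1n$, the printed exponent being an apparent typographical slip. There is no real computational obstacle; the only genuine step is the boundary term in the sine case, which is precisely where the assumption $\int_0^{2\pi}g\,dx=0$ (equivalently $f(0)=f(2\pi)$) must be invoked, since otherwise the surviving term $\frac{f(0)-f(2\pi)}{n}$ would break the second identity.
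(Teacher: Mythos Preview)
Your approach is essentially the same as the paper's: the paper's proof consists of two sentences, the first reducing the parenthetical case of a differentiable $f$ with integrable derivative to the main one via $f(x)=f(0)+\int_0^x f'\,d\mu$ (citing Natanson), and the second simply invoking integration by parts. You have supplied the details the paper omits, and you correctly flag that the displayed formulas carry $n$ where integration by parts produces $\tfrac{1}{n}$; this is confirmed by the paper's own use of the lemma in the proof of Proposition~\ref{pr:4.3}, where the factor $\tfrac{1}{in}$ appears.
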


\begin{proof}
By \cite[p.~251]{Nat}, for a differentiable function $f$ we
have $f(x)=f(0)+\int_0^{x}f'(t)d\mu$ for every $x\in [0,2\pi]$. It remains
to use the integration by parts \cite[Ch.~IX, \S8, Th.~5]{Nat}.
\end{proof}

For a function $f:[0,2\pi]^2\to \mathbb{C}$, the expression
$$f\sim\sum\nolimits_{n,m\in\mathbb Z} a_{nm}e^{inx}e^{imy}$$
will denote that $f$ is square integrable and $\sum\nolimits_{n,m\in\mathbb Z} a_{nm}e^{inx}e^{imy}$
is the Fourier series of $f$ which converges to $f$ in the $L_2$-norm. The same
concerns a function $f:[0,2\pi]\to \mathbb{C}$.

\begin{lemma} \label{pr:4.2}
Let $\;f\sim\sum\nolimits_{n,m\in\mathbb Z} a_{nm}e^{inx}e^{imy}\,$,
$\;\alpha_n\sim\sum\nolimits_{m\in\mathbb Z} a_{nm}e^{imy}$, $n\in\mathbb Z$
and $f_y(x):=f(x,y)$.

Then there exists a subset $B\subseteq [0,2\pi]$ with $\mu(B)=2\pi$ such that
$\forall\,y\in B$ the function $f_y$ is square integrable and
\begin{equation}\label{eq:4}
f_y\sim\sum\nolimits_{n\in\mathbb Z} \alpha_n(y)e^{inx}.
\end{equation}
\end{lemma}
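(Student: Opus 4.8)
The plan is to recognize that $\alpha_n(y)$ must be, for almost every $y$, precisely the $n$-th $x$-Fourier coefficient of the slice $f_y$, and then to handle the countably many exceptional null sets. First I would invoke integrability of the slices: since $f$ is square integrable on the finite-measure square $[0,2\pi]^2$, it is also integrable, so by the Tonelli/Fubini theorem there is a null set outside of which $\int_0^{2\pi}|f(x,y)|^2\,dx<\infty$; call its complement $B_0$, so $\mu(B_0)=2\pi$ and $f_y$ is square integrable for $y\in B_0$. For such $y$ the $n$-th Fourier coefficient (in $x$) of $f_y$, namely $c_n(y):=\frac{1}{2\pi}\int_0^{2\pi}f(x,y)e^{-inx}\,dx$, is well defined.

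Next, for each fixed $n\in\mathbb Z$ I would prove that the function $y\mapsto c_n(y)$ agrees with $\alpha_n$ almost everywhere. First, $c_n\in L_2[0,2\pi]$: by the Cauchy--Schwarz inequality $|c_n(y)|^2\le\frac{1}{2\pi}\int_0^{2\pi}|f(x,y)|^2\,dx$, and integrating in $y$ bounds $\int_0^{2\pi}|c_n|^2\,dy$ by a multiple of the square integral of $f$, which is finite. To identify the Fourier coefficients of $c_n$ I would compute $\frac{1}{2\pi}\int_0^{2\pi}c_n(y)e^{-imy}\,dy$ and interchange the order of integration; this interchange is justified by Fubini's theorem because $f\in L_1([0,2\pi]^2)$, and it yields exactly $a_{nm}$. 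Hence $c_n$ and $\alpha_n$ are two $L_2$-functions with the same Fourier coefficients, so by the uniqueness of Fourier coefficients they coincide outside a null set; let $B_n$ denote the full-measure set on which $c_n(y)=\alpha_n(y)$.

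Finally I would set $B=B_0\cap\bigcap_{n\in\mathbb Z}B_n$, a countable intersection of sets of full measure, so that $\mu(B)=2\pi$. For every $y\in B$ the slice $f_y$ is square integrable and its $n$-th $x$-Fourier coefficient is $c_n(y)=\alpha_n(y)$ for all $n$; by the Riesz--Fischer theorem its Fourier series then converges to $f_y$ in the $L_2$-norm, which is exactly the assertion \eqref{eq:4}. The argument is, in essence, careful bookkeeping, and the only points demanding attention are the Fubini interchange (whose hypothesis is supplied by $f\in L_2\subseteq L_1$ on the finite-measure square) and the assembly of countably many null sets into a single exceptional set; neither constitutes a genuine obstacle.
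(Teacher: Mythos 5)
Your proof is correct and follows essentially the same route as the paper: identify $\alpha_n(y)$ with the $n$-th $x$-Fourier coefficient $\frac{1}{2\pi}\int_0^{2\pi}f(x,y)e^{-inx}\,dx$ of the slice, then intersect the countably many full-measure exceptional sets. The paper packages the identification as an equality of two continuous operators $T_{1,n}=T_{2,n}$ agreeing on the exponential basis, while you unwind it by computing the Fourier coefficients of $c_n$ directly via Fubini and invoking uniqueness; the two arguments are interchangeable.
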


\begin{proof} For every $n\in\mathbb Z$ we consider the linear continuous operator $T_{1,n}:L_2([0,2\pi]^2)\to L_2[0,2\pi]$, which any function $g\sim\sum\nolimits_{n,m\in\mathbb Z} b_{nm}e^{inx}e^{imy}$ sends to the function $T_{1,n}g\sim\sum\nolimits_{m\in\mathbb Z} b_{nm}e^{imy}$. Note that $T_{1,n}f=\alpha_n$ for every $n\in\mathbb Z$. Besides we consider the linear operator $T_{2,n}:L_2([0,2\pi]^2)\to L_2[0,2\pi]$, $$T_{2,n}g(y)=\frac{1}{2\pi}\int_{0}^{2\pi}g(x,y)e^{-inx}dx.$$ Since $$\left(\int_{0}^{2\pi}h(x)dx\right)^2\leq 2\pi\int_{0}^{2\pi}|h(x)|^2dx$$ for every measurable on$[0,2\pi]$ function $h$, by the Fubini theorem we have
$$
\int_{0}^{2\pi}T_{2,n}g^2(y)dy\leq \frac{1}{2\pi}\int_{0}^{2\pi}\int_{0}^{2\pi}|g(x,y)e^{-inx}|^2dxdy=$$
$$=\frac{1}{2\pi}\int_{0}^{2\pi}\int_{0}^{2\pi}|g(x,y)|^2dxdy.
$$
Thus $T_{2,n}$ is continuous. Since $T_{1,n}(e^{ikx}e^{imy})=T_{2,n}(e^{ikx}e^{imy})$ for every $k,m\in\mathbb Z$, $T_{1,n}=T_{2,n}$, in particular, $T_{2,n}f=\alpha_n$. By the Fubini theorem there exists a set $B_1\subseteq [0,2\pi]$ with $\mu(B_1)=2\pi$ such that $f_y\in L_2[0,2\pi]$ for all $y\in B_1$. Now we choose a set $B\subseteq B_1$
with $\mu(B)=2\pi$ such that $\alpha_n(y)=\frac{1}{2\pi}\int_{0}^{2\pi}f(x,y)e^{-inx}dx$  for every $n\in\mathbb Z$ and $y\in B$.
This gives (\ref{eq:4}).
\end{proof}

\begin{prop} \label{pr:4.3}
Let a function $f:[0,2\pi]^2\to\mathbb R$ be differentiable with
respect to $x$, $f'_x$ be square integrable and, moreover,
$f(0,y)=f(2\pi,y)=\alpha(y)$ for every $y\in [0,2\pi]$ with the square integrable
$\alpha(y)$. Then $f$ is also square integrable. Moreover, if
$$f\sim\sum\nolimits_{n,m\in\mathbb Z} a_{nm}e^{inx}e^{imy}\;\;\;\;\;\;\mbox{then}
\;\;\;\;\;\;f'_x\sim\sum\nolimits_{n,m\in\mathbb Z} ina_{nm}e^{inx}e^{imy}.$$
\end{prop}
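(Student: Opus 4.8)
The plan is to prove the statement in two stages: first establish square integrability of $f$, then identify the Fourier coefficients of $f'_x$. For the integrability, I would combine the hypothesis that $f'_x$ is square integrable with the boundary condition $f(0,y)=f(2\pi,y)=\alpha(y)$. Writing $f(x,y)=\alpha(y)+\int_0^x f'_x(t,y)\,dt$ for each fixed $y$, the Cauchy--Schwarz inequality gives $|f(x,y)-\alpha(y)|^2\le 2\pi\int_0^{2\pi}(f'_x(t,y))^2\,dt$, so that $\int_0^{2\pi}\int_0^{2\pi}|f(x,y)|^2\,dx\,dy$ is bounded by a constant multiple of $\int\int(f'_x)^2\,dx\,dy+\int_0^{2\pi}|\alpha(y)|^2\,dy$, both of which are finite. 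Hence $f\in L_2([0,2\pi]^2)$ and possesses a genuine Fourier expansion $\sum a_{nm}e^{inx}e^{imy}$.

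For the coefficient identity, the natural strategy is to reduce the two-dimensional assertion to the one-variable Lemma \ref{pr:4.1} via the section-wise decomposition of Lemma \ref{pr:4.2}. Let $\sum_m a_{nm}e^{imy}$ be the $L_2$-series of the coefficient function $\alpha_n$, and similarly let $f'_x\sim\sum_{n,m}b_{nm}e^{inx}e^{imy}$ with associated section-coefficient functions $\beta_n\sim\sum_m b_{nm}e^{imy}$. By Lemma \ref{pr:4.2} applied to $f$ and separately to $f'_x$, there is a set $B\subseteq[0,2\pi]$ of full measure on which simultaneously $f_y\sim\sum_n\alpha_n(y)e^{inx}$ and $(f'_x)_y\sim\sum_n\beta_n(y)e^{inx}$, and both sections are square integrable. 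For each such $y$, the function $x\mapsto f(x,y)$ is differentiable in $x$ with derivative $(f'_x)_y$, its derivative is square (hence absolutely) integrable, and $f(0,y)=f(2\pi,y)$; thus Lemma \ref{pr:4.1} applies to the one-variable function $f_y$ and yields the relation between the $x$-Fourier coefficients of $f_y$ and those of $(f'_x)_y$, namely $\beta_n(y)=in\,\alpha_n(y)$ for every $n$ and every $y\in B$.

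Since $B$ has full measure, the equality $\beta_n=in\,\alpha_n$ holds in $L_2[0,2\pi]$, so comparing their $y$-Fourier coefficients gives $b_{nm}=in\,a_{nm}$ for all $n,m\in\mathbb Z$. This is exactly the desired identity $f'_x\sim\sum_{n,m}in\,a_{nm}e^{inx}e^{imy}$.

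I expect the main obstacle to be the careful bookkeeping of exceptional null sets and the justification that one full-measure set $B$ can be chosen to work for $f$, for $f'_x$, and for all countably many coefficients $n$ simultaneously — this is where Lemma \ref{pr:4.2} does the essential work, reducing the measurability issues to a clean statement. A subtler point is verifying that the hypotheses of Lemma \ref{pr:4.1} genuinely hold for almost every section $f_y$: one must confirm that for $y\in B$ the $x$-derivative of $f_y$ coincides a.e.\ with the section $(f'_x)_y$ of the square-integrable function $f'_x$, which follows from differentiability of $f$ in $x$ together with the coincidence of $(f'_x)_y$ with its $L_2$-representative on a full-measure set of $x$-values for almost every $y$ (again via Fubini). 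Once these measure-theoretic matchings are in place, the coefficient computation itself is immediate from the one-dimensional lemma.
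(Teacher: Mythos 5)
Your argument is correct and follows essentially the same route as the paper: square integrability of $f$ via the pointwise bound $|f(x,y)-\alpha(y)|^2\le 2\pi\int_0^{2\pi}(f'_x(t,y))^2\,dt$, and the coefficient identity by applying the one-variable Lemma \ref{pr:4.1} to almost every section $f_y$ and then integrating in $y$ (the paper writes this as a single Fubini computation on the double integral rather than routing it through the section-coefficient functions of Lemma \ref{pr:4.2}, but the content is identical). The one point to add is the case $n=0$, which Lemma \ref{pr:4.1} does not cover: there you need the separate observation that $\beta_0(y)=\frac{1}{2\pi}\int_0^{2\pi}f'_x(x,y)\,dx=\frac{1}{2\pi}\bigl(f(2\pi,y)-f(0,y)\bigr)=0$, exactly as the paper does when it shows $b_{0m}=0$.
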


\begin{proof}
By the Fubini theorem there exists a set $B\subseteq [0,2\pi]$ with $\mu(B)=2\pi$
such that for every $y\in B$ the function $g_y:[0,2\pi]\to\mathbb R\,$, $\,g_y(x)=f'_x(x,y)$ is square integrable, and in particular integrable. Note that for
an arbitrary $y\in B$ we have
$$
\int_0^{2\pi}f'_x(x,y)dx=f(2\pi,y)-f(0,y)=0.
$$
So, by the Fubini theorem, $$b_{0m}=\frac{1}{4\pi^2}\int_0^{2\pi}\int_0^{2\pi}f'_x(x,y)e^{-imy}dxdy=0$$ for all $m\in\mathbb Z$.

Consider the function $h(x,y)=f(x,y)-\alpha(y)$. For every $y\in B$ and $x\in[0,2\pi]$ we have
$$
h^2(x,y)=\left(\int_0^xg_y(t)dt\right)^2\leq x \int_0^xg^2_y(t)dt\leq 2\pi \int_0^{2\pi}g^2_y(t)dt.
$$
Thus
$$
\int_0^{2\pi}\int_0^{2\pi}h^2(x,y)dxdy\leq 4\pi^2 \int_0^{2\pi}\int_0^{2\pi}(f'_x(x,y))^2dxdy,
$$
and $h$ is square integrable. So $f$ is square integrable too. Let $$f\sim\sum\nolimits_{n,m\in\mathbb Z} a_{nm}e^{inx}e^{imy}.$$

Now using Lemma \ref{pr:4.1} and the Fubini theorem for every $m, n\in\mathbb Z$, $n\ne 0$, we have
$$
a_{nm}=\frac{1}{4\pi^2}\int_B e^{-imy}dy\int_0^{2\pi}f(x,y)e^{-inx}dx=
$$
$$
=\frac{1}{in}\frac{1}{4\pi^2}\int_B e^{-imy}dy\int_0^{2\pi}f'_x(x,y)e^{-inx}dx= \frac{b_{nm}}{in}.
$$

\end{proof}

\section{Main result}

The following theorem gives a positive answer to
the Mazur problem for functions with square integrable pure partial derivatives (Problem \ref{prob:1.1}).

\begin{thm}\label{th:4.51}
Let $f:[0,1]^2\to\mathbb R$ and there exist square integrable derivatives
$f''_{xx}\,,\;f''_{yy}$. Then

$(i)$
a.e. there are equal mixed derivatives $f''_{xy}$ and $f''_{yx}$ which are square integrable;

$(ii)$ there exists $A\subseteq[0,1]$ with $\mu(A)=1$ such
that $f'_x$ is continuous with respect to $y$ at every point
of $A\times[0,1]$;

$(iii)$ $f$ is jointly continuous.

\end{thm}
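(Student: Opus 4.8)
The plan is to move to Fourier series on a torus, extract from the $L^2$-control of the pure second derivatives a summability bound on the double Fourier coefficients, manufacture an $L^2$ candidate $g$ for the common mixed derivative, and then identify $g$ with the genuine pointwise mixed derivatives using Tolstov's Proposition~\ref{pr:1.1} together with the partial-continuity Corollary~\ref{pr:3.3}.

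First I would record that the mere existence of $f''_{xx}$ makes $f$ differentiable, hence continuous, in $x$, and existence of $f''_{yy}$ makes it continuous in $y$; so $f$ is separately continuous. Since all three assertions are local and invariant under restriction, after rescaling $[0,1]$ to a subinterval of the circle I would extend $f$ to a bi-periodic function on a torus by a first-order reflection, matching values and first partials across the seams. Such an extension keeps $f''_{xx}$ and $f''_{yy}$ square integrable and, because the first partials are matched, makes $f,f'_x,f'_y$ periodic with no boundary jump in the first derivatives; constructing an extension that simultaneously preserves the anisotropic integrability of $\int\!\!\int (f''_{xx})^2$ and $\int\!\!\int (f''_{yy})^2$ is the one genuinely technical point of this reduction.

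On the torus write $f\sim\sum a_{nm}e^{inx}e^{imy}$. Applying Proposition~\ref{pr:4.3} in $x$ gives $f'_x\sim\sum in\,a_{nm}e^{inx}e^{imy}$, and applying it once more to the ($x$-periodic) function $f'_x$ gives $f''_{xx}\sim\sum(in)^2a_{nm}e^{inx}e^{imy}$; Parseval then yields $\sum n^4|a_{nm}|^2=\|f''_{xx}\|_2^2<\infty$, and symmetrically $\sum m^4|a_{nm}|^2<\infty$. From $2n^2m^2\le n^4+m^4$ I get the decisive bound $\sum n^2m^2|a_{nm}|^2<\infty$, and together with $\sum n^2|a_{nm}|^2,\sum m^2|a_{nm}|^2<\infty$ this gives $\sum(1+n^2)(1+m^2)|a_{nm}|^2<\infty$. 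By Cauchy--Schwarz the last bound forces $\sum|a_{nm}|<\infty$, so the Fourier series of $f$ converges absolutely to a continuous function; as $f$ is separately continuous and agrees with this sum almost everywhere, a two-fold slicing argument (equality for a.e.\ $y$ on a.e.\ vertical line, hence everywhere on it by continuity in $y$, then everywhere by continuity in $x$) shows $f$ equals it identically, which is (iii). The bound $\sum n^2m^2|a_{nm}|^2<\infty$ lets me define $g\in L^2$ by $g\sim\sum(-nm)a_{nm}e^{inx}e^{imy}$; this is the only admissible value for the common mixed derivative.

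It remains to identify $g$ with the pointwise derivatives, and this is where I expect the real work. Slicing the series of $f'_x$ and of $y\mapsto\int_0^y g(x_0,v)\,dv$ by Lemma~\ref{pr:4.2} shows that, for a.e.\ $x_0$, the functions $y\mapsto f'_x(x_0,y)$ and $y\mapsto f'_x(x_0,0)+\int_0^y g(x_0,v)\,dv$ have the same $y$-Fourier coefficients, hence agree for a.e.\ $y$. The difficulty is to upgrade this to \emph{every} $y$, that is, to rule out a jump of $y\mapsto f'_x(x_0,y)$ on the null set $N=\{y:\int_0^1(f''_{xx}(x,y))^2dx=\infty\}$; only then are (ii) and (i) immediate, the latter by differentiating the integral. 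To carry out the upgrade I would set $F(x,y)=\int_0^x\!\int_0^y g$ and invoke Proposition~\ref{pr:1.1}: for a.e.\ $x_0$ it gives $F'_x(x_0,y)=\int_0^y g(x_0,v)\,dv$ for \emph{all} $y$, a function continuous in $y$, with $F''_{xy}=g$ a.e. Thus $\Theta(y):=f'_x(x_0,y)-F'_x(x_0,y)$ is defined for all $y$ and constant for a.e.\ $y$; Corollary~\ref{pr:3.3} makes $f'_x$, and hence $\Theta$, continuous on each strip $[0,1]\times\{y:\int_0^1(f''_{xx})^2dx\le k\}$, which pins $\Theta$ to its constant value off $N$, while the continuity of $F'_x(x_0,\cdot)$ bridges across $N$. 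The conceptual reason the bridge closes is the symmetric identity $f'_x(x_0,y)-f'_x(x_0,y_0)=\lim_{h\to0}\int_{y_0}^y\frac{f'_y(x_0+h,v)-f'_y(x_0,v)}{h}\,dv$, valid for all $y,y_0$ by the bare existence of $f'_x$, whose inner integrand tends to $g(x_0,\cdot)$; justifying the passage to the limit uniformly near $N$ is the step I would spend the most care on.
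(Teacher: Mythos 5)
Your skeleton matches the paper's: pass to Fourier series, use Proposition~\ref{pr:4.3} twice to get $\sum n^4|a_{nm}|^2+\sum m^4|a_{nm}|^2<\infty$, deduce $\sum n^2m^2|a_{nm}|^2<\infty$, take $g\sim-\sum nm\,a_{nm}e^{inx}e^{imy}$ as the $L^2$ candidate, and identify it through slicing (Lemma~\ref{pr:4.2}) and Tolstov's Proposition~\ref{pr:1.1}. Your derivation of (iii) from $\sum|a_{nm}|<\infty$ via Cauchy--Schwarz is a nice touch not in the paper. But two of your steps have genuine gaps. First, the reduction: a ``first-order reflection matching values and first partials'' is not innocuous. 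An even reflection matches values but negates $f'_x$ at the seam; the odd reflection $2f(0,y)-f(-x,y)$ matches $f'_x$ but negates $f''_{xx}$, so the pure second derivative ceases to exist on the seam line --- and Proposition~\ref{pr:4.3} applied to $f'_x$ needs $f'_x(\cdot,y)$ to be differentiable with $f'_x(2\pi,y)-f'_x(0,y)=\int_0^{2\pi}f''_{xx}\,dx$ on the whole period. You flag this as ``the one genuinely technical point'' but do not resolve it. The paper sidesteps it entirely by multiplying $f$ by smooth cutoffs $\varphi_n$ equal to $1$ on $[\frac1n,2\pi-\frac1n]^2$ and vanishing near the boundary, so that $f,f'_x,f'_y,f''_{xx},f''_{yy}$ all vanish near $\partial[0,2\pi]^2$ and periodicity is automatic.

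Second, and more seriously, your upgrade of $f'_x(x_0,y)=c+\int_0^y g(x_0,v)\,dv$ from a.e.\ $y$ to \emph{every} $y$ does not close. Continuity of $\Theta$ on each strip $Y_k=\{y:\int_0^1(f''_{xx})^2dx\le k\}$ pins its value only at points of $Y_k$ lying in the closure of $Y_k$ intersected with the conull set, and your ``bridge'' identity requires passing to the limit in $\int_{y_0}^y\frac{f'_y(x_0+h,v)-f'_y(x_0,v)}{h}\,dv$, whose integrand converges pointwise precisely to the mixed derivative $f''_{yx}$ you are trying to construct --- you would need $L^1$ convergence of these difference quotients, which you do not establish and which is essentially the content of the theorem. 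The paper avoids this upgrade altogether by integrating once more in $x$: for every $y$ in a conull set $B$, $f(x,y)=\int_0^x f'_x(u,y)\,du=\int_0^x F(u,y)\,du=G(x,y):=\int_0^x\!\!\int_0^y h$ for \emph{all} $x$; since $f$ is continuous in $y$ (free, because $f'_y$ exists), $G$ is jointly continuous and $B$ is dense, $f\equiv G$ everywhere. All of (i) and (ii) then follow from Tolstov's theory applied to the double integral $G$, with no need to control $f'_x$ across the exceptional set $N$. I would replace your last paragraph by this integration-in-$x$ argument and your extension by the cutoff reduction; as written, the proof is incomplete at exactly the point where the theorem's difficulty lives.
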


\begin{proof} $(i)$. We consider a function $f:[0,2\pi]^2\to \mathbb{R}$. It is sufficient to prove this assertion for a sequence of products
$f\cdot\varphi_n$ of $f$ by twice differentiable functions $\varphi_n$ with
bounded derivatives and the conditions $\varphi_n(x,y)=1$ for all
$(x,y)\in[\frac{1}{n},2\pi-\frac{1}{n}]^2$ and $\varphi_n(x,y)=0$
on an open in $[0,2\pi]^2$ set which contains the boundary of $[0,2\pi]^2$. Thus it is sufficient to consider a function $f$ which satisfies the additional assumption
$$f(x,y)=f'_x(x,y)=f'_y(x,y)=f''_{xx}(x,y)=f''_{yy}(x,y)=0$$
on an open in $[0,2\pi]^2$ set which contains the boundary of $[0,2\pi]^2$.

By Proposition \ref{pr:4.3}, the function $f$, $f'_x$ and $f'_y$ are square integrable.
 Let
$$f\sim\sum\nolimits_{n,m\in\mathbb Z}a_{nm}e^{inx}e^{imy}.$$
Then, by Proposition \ref{pr:4.3},
$$f'_x\sim\sum\nolimits_{n,m\in\mathbb Z}ina_{nm}e^{inx}e^{imy},\quad f'_y\sim\sum\nolimits_{n,m\in\mathbb Z}ima_{nm}e^{inx}e^{imy},$$
$$f''_{xx}\sim\sum\nolimits_{n,m\in\mathbb Z}-n^2a_{nm}e^{inx}e^{imy}\quad {\mbox and}\quad f''_{yy}\sim\sum\nolimits_{n,m\in\mathbb Z}-m^2a_{nm}e^{inx}e^{imy}.$$
Let
$$\alpha_m\sim\sum\nolimits_{n\in\mathbb Z}ina_{nm}e^{inx},\;m\in\mathbb Z.$$
Using Proposition \ref{pr:4.2}, we choose a set $A_1\subseteq [0,2\pi]$ so that
$\mu(A_1)=2\pi$ and for every $x\in A_1\,$ the function $g^x$ is square integrable and
$\;g^x\sim\sum\nolimits_{m\in\mathbb Z}\alpha_m(x)e^{imy}$, where
$g^x(y):=f'_x(x,y)$. Since there exist open neighborhoods $V_1$ and $V_2$ of
points $0$ and $2\pi$ in $[0,2\pi]$ such that $g^x(y)=0$ for every $x\in A_1$  and $y\in V_1\cup V_2$, according to the well-known
localization theorem of Riemann we have $$\sum\nolimits_{m\in\mathbb Z} \alpha_m(x)=0$$
for every $x\in A_1$. Since
$$\sum\nolimits_{n,m\in\mathbb Z}m^2n^2 |a_{nm}|^2\leq
\sum\nolimits_{n,m\in\mathbb Z}(m^4+n^4) |a_{nm}|^2<\infty,$$
there exists a square integrable function $h:[0,2\pi]^2\to\mathbb R$ with
$$h\sim-\sum\nolimits_{n,m\in\mathbb Z}m n a_{nm}e^{inx}e^{imy}.$$
Once more, using Proposition \ref{pr:4.2} we choose a set
$A_2\subseteq A_1$ so that $\mu(A_2)=2\pi$ and for every $x\in A_2\,$,
the function $h^x$ is square integrable and $\;h^x\sim\sum\nolimits_{m\in\mathbb Z} im\alpha_m(x)e^{imy}$, where
$h^x(y):=h(x,y)$. Put
$$F(x,y)=\int_{0}^{y}h(x,t)dt.$$
Using Theorem 3 of \cite[Ch.~X, \S4]{Nat} on the termwise integration of Fourier series
of absolutely integrable functions, we obtain that for every $x\in A_2$ the equality
$$F(x,y)=\sum\nolimits_{m\in\mathbb Z} \alpha_m(x)(e^{imy}-1)=
\sum\nolimits_{m\in\mathbb Z} \alpha_m(x)e^{imy}$$
is satisfied. Note that the function $F$ is square integrable (it may be obtained analogously as the square integrability of $h$ in the proof of the Proposition \ref{pr:4.3}). Thus $F=f'_x$ in $L_2[0,2\pi]^2$.

Hence, there exists a set $B\subseteq [0,2\pi]$ with $\mu(B)=2\pi$ such that
$$\mu\{x\in[0,2\pi]:f'_x(x,y)=F(x,y)\}=2\pi$$
and the function $g_y(x)=f'_x(x,y)$ is integrable for every $y\in B$.

Consider the function $$G(x,y)=\int_{0}^{x}du\int_{0}^{y}h(u,v)dv.$$ Now for every $x\in[0,2\pi]$ and $y\in B$ we have
$$
f(x,y)=\int_0^{x}f'_x(u,y)du=\int_0^{x}F(u,y)du=\int_{0}^{x}du\int_{0}^{y}h(u,v)dv=G(x,y).
$$
Since $f$ is continuous with respect to the second variable, $G$ is jointly continuous and
$B$ is dense in $[0,2\pi]$, $f(x,y)=G(x,y)$ for every $(x,y)\in[0,2\pi]^2$. According to $\S 7$ from \cite{Tolstov1}, a.e. there exist mixed derivatives $f''_{xy}$ and $f''_{yx}$ which are a.e. equal to $h$ and hence are square integrable.

$(ii)$, $(iii)$. It follows from the proof of $(i)$ that for every $n\in\mathbb N$ there exists a function $G:[0,1]^2\to\mathbb R$, $G(x,y)=\int_{0}^{x}du\int_{0}^{y}h(u,v)dv$,
such that $f(x,y)=G(x,y)$ for every $(x,y)\in[\frac{1}{n},1-\frac{1}{n}]^2$. Therefore $f$ is jointly continuous on $(0,1)^2$ and according to $\S 7$ from \cite{Tolstov1} there exists
$A_0\subseteq[0,1]$ with $\mu(A_0)=1$ such
that $f'_x$ is continuous with respect to $y$ at every point
of $A\times(0,1)$. It remains to use this fact to some separately twice differentiable extension $\tilde{f}:(-1,2)^2\to\mathbb R$ of $f$
with square integrable derivatives
$\tilde{f}''_{xx}$ and $\tilde{f}''_{yy}$.
\end{proof}

\begin{cor}\label{cor:4.6}
Let $f(x,y)$ have on $[0,1]^2$ the second pure partial derivatives. Then there
exists an open dense set $G\subseteq [0,1]^2$ on which there are equal mixed
partial derivatives $f''_{xy}$ and $f''_{yx}$.
\end{cor}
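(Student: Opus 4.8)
The plan is to \emph{localise} Theorem \ref{th:4.51} and then glue the local conclusions together into an open dense set. I would first reduce the corollary to the claim that every open subrectangle $Q\subseteq[0,1]^2$ contains an open subrectangle $P$ on which both $f''_{xx}$ and $f''_{yy}$ are square integrable. Granting this, an affine change of variables carries $P$ onto $[0,1]^2$ and preserves square integrability of the pure second derivatives, so part $(i)$ of Theorem \ref{th:4.51} yields that $f''_{xy}$ and $f''_{yx}$ exist and coincide almost everywhere on $P$. Choosing one such $P$ inside each member of a countable base of $[0,1]^2$ and letting $G$ be the union of their interiors produces an open set meeting every subrectangle, hence dense, on whose components the two mixed derivatives agree.

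To manufacture the rectangle $P$ I would run a category/measure argument on the pure second derivatives. Because $f$ is separately differentiable it is separately continuous, hence jointly measurable, and the representation $f''_{xx}(x,y)=\lim_{n\to\infty}n^{2}\bigl(f(x+\tfrac1n,y)-2f(x,y)+f(x-\tfrac1n,y)\bigr)$ exhibits $f''_{xx}$, and likewise $f''_{yy}$, as a pointwise limit of separately continuous functions; both are therefore Baire measurable and finite at every point. Consequently the sets $E_N=\{(x,y):|f''_{xx}(x,y)|\le N,\ |f''_{yy}(x,y)|\le N\}$ are measurable, possess the Baire property, and exhaust the square. The Baire category theorem (the square is a Baire space) then produces an $N$ for which $E_N$ is comeager in some $P\subseteq Q$, and a Lebesgue density argument produces an $N$ for which $E_N$ has density close to $1$ in small squares.

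The heart of the matter, and the step I expect to be the main obstacle, is upgrading ``$E_N$ is large in $P$'' to ``$f''_{xx},f''_{yy}$ are square integrable on $P$.'' The difficulty is structural: separate twice differentiability does \emph{not} force joint continuity of $f$ (the function $\frac{xy}{x^2+y^2}$, set equal to $0$ at the origin, is separately twice differentiable yet discontinuous there), so the level sets $E_N$ need not be closed and one cannot simply extract a subrectangle on which the pure derivatives are bounded \emph{everywhere}. Moreover comeagreness of $E_N$ only yields boundedness on a residual set (a Baire-1 function bounded on a comeager set is bounded at each of its continuity points, which is again only comeager, not of full measure), while positivity of measure yields density tending to $1$ but not a null complement. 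To close this gap I would exploit the one-variable structure of pure second derivatives, namely that each $f''_{xx}(\cdot,y)$ is a Baire-1 function and hence is bounded on a dense family of subintervals, and combine it with the classical theorem that a separately continuous function is jointly continuous on a comeager set, localising around a point that is simultaneously a point of joint continuity of $f$ and of continuity of the two pure derivatives along the relevant lines; near such a point the second difference quotients should be uniformly controlled, which is what is needed to secure a genuine bound on an open $P$ and thereby the square integrability that feeds Theorem \ref{th:4.51}.
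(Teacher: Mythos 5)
Your overall architecture coincides with the paper's: inside every open subrectangle find a smaller rectangle on which $f''_{xx}$ and $f''_{yy}$ are square integrable, apply Theorem \ref{th:4.51} there after an affine rescaling, and take the union over a countable base. The problem is precisely the step you flag as ``the heart of the matter'': you never establish boundedness (or square integrability) of the pure second derivatives on an actual open set, and the patch you sketch does not close the gap. The representation $f''_{xx}=\lim_n n^2\bigl(f(x+\tfrac1n,y)-2f(x,y)+f(x-\tfrac1n,y)\bigr)$ exhibits $f''_{xx}$ only as a pointwise limit of \emph{separately} continuous functions, i.e.\ as a function of Baire class at most two, and a finite Baire-2 function can be unbounded on every open set (take the value $n$ at the $n$-th point of a countable dense set and $0$ elsewhere). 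Hence measurability or the Baire property of the sets $E_N$, combined with category or density arguments, yields only that some $E_N$ is comeager or of large measure in a rectangle $P$ --- never a bound valid on all of $P$, which is what feeding Theorem \ref{th:4.51} requires. Your proposed repair fails for the very reason you mention yourself: the set of joint continuity points of a separately continuous function is only a dense $G_\delta$ and need not contain any open set, so you cannot produce an open rectangle on which the second difference quotients are jointly continuous (which would make $f''_{xx}$ Baire-1 there), and continuity of the various data at a single point does not give uniform control of the difference quotients on a neighbourhood of that point. The one-variable observation that each $f''_{xx}(\cdot,y)$ is Baire-1 in $x$ gives a dense open set of $x$'s depending on $y$, with no uniformity in $y$, so no rectangle can be extracted from it either.

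The paper closes exactly this gap by quoting a nontrivial theorem of Tolstov (\cite[p.~427]{Tolstov}): the pure second partial derivatives are of the \emph{first} Baire class as functions of the two variables jointly. A finite Baire-1 function has a continuity point in every open set, and the set of points at which it is locally bounded is open; hence each of $f''_{xx}$, $f''_{yy}$ is locally bounded on an open dense set, and the intersection $G$ of the two such sets is the required open dense set, on each compact subrectangle of which both derivatives are bounded and Theorem \ref{th:4.51} applies. So the missing ingredient is not a cleverer category argument but the external input that the pure second derivatives are jointly Baire-1; without it your scheme cannot be completed.
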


\begin{proof}
Note that by \cite[p.~427]{Tolstov} the functions $f''_{xx}$ and $f''_{yy}$ are of
the first Baire class. Hence, there exists an open dense subset $G\subseteq [0,1]^2$
on which the pure derivations are locally bounded. It remains to use Theorem \ref{th:4.51}.
\end{proof}

\section{Examples, questions}

For a real valued function $f$, we denote $\mathrm{supp}f=\{x\in \mathbb{R}:f(x)\ne 0\}$.

The following example shows that the assumption $$\sup_{y\in Y}\int_0^1 (f''_{xx}(x,y))^2dx<\infty$$ in Corollary \ref{pr:3.3} cannot be replaced by
$$\sup_{y\in Y}\int_0^1 |f''_{xx}(x,y)|dx<\infty.$$

\begin{thm} \label{th:4.7}
There exists a function $f:[0,1]^2\to\mathbb R$ satisfying the following conditions:
\begin{enumerate}
\item[$(i)$] $f$ is separately infinitely differentiable;
\item[$(ii)$] $\sup_{y\in [0,1]}\int_0^1 |f''_{xx}(x,y)|dx=\sup_{x\in [0,1]}\int_0^1 |f''_{yy}(x,y)|dy<\infty$;
\item[$(iii)$] $f'_x$ and $f'_y$ are jointly discontinuous at every point of some closed set $E$ of positive measure.
\end{enumerate}
\end{thm}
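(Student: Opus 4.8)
The plan is to take $E=K\times K$ for a fat Cantor set $K\subseteq[0,1]$ with $\mu(K)>0$, and to build $f$ as a sum of smooth bumps supported on rectangles $I\times J$, where $I,J$ run over the complementary intervals (``gaps'') of $K$. Fixing a $C^\infty$ profile $\varphi$ with $\mathrm{supp}\,\varphi\subseteq(0,1)$ and a nonconstant, and choosing a family of pairs of equal-length gaps $(I_j,J_j)$, $|I_j|=|J_j|=:r_j$, I would set
$$f(x,y)=\sum_j \delta r_j\,\varphi\Bigl(\tfrac{x-\min I_j}{r_j}\Bigr)\varphi\Bigl(\tfrac{y-\min J_j}{r_j}\Bigr)$$
for a small constant $\delta>0$. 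On the $j$-th rectangle one has $|f'_x|,|f'_y|\lesssim\delta$ but with oscillation $\gtrsim\delta$, while any line meeting this bump contributes exactly $\delta\,\|\varphi''\|_1\|\varphi\|_\infty$ to $\int_0^1|f''_{xx}|\,dx$ (resp. to $\int_0^1|f''_{yy}|\,dy$).

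The crux is to choose the pairs so that \emph{each gap occurs in at most one pair}, yet the rectangles $I_j\times J_j$ accumulate at every point of $K\times K$. The first property makes everything transparent: since the gaps are pairwise disjoint, every vertical or horizontal line meets at most one rectangle, so each slice $f(\cdot,y)$ and $f(x,\cdot)$ is either $0$ or a single smooth bump (giving (i) after using that $\varphi$ vanishes to infinite order at the endpoints of its gap), and each slice integral in (ii) is at most $\delta\|\varphi''\|_1\|\varphi\|_\infty$, yielding finiteness and, by $|I_j|=|J_j|$, the equality. For the accumulation I would use the hierarchy $K=\bigcap_s K_s$ with $K_s$ a union of $2^s$ bands: each stage-$s$ cell (a product of two stage-$s$ bands) contains $\asymp 2^{s}$ gaps of stage $2s$ in each coordinate, so one can assign to the $4^s$ stage-$s$ cells pairwise distinct stage-$2s$ gaps, a scale separation that keeps every gap used only once across all stages while placing, inside every stage-$s$ cell, a rectangle of size $\asymp 2^{-2s}$. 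Hence every $(x_0,y_0)\in K\times K$ has chosen rectangles of arbitrarily small size arbitrarily close to it.

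Discontinuity then follows: at $(x_0,y_0)\in K\times K$ both slices $f(\cdot,y_0)$ and $f(x_0,\cdot)$ vanish identically, so $f'_x(x_0,y_0)=f'_y(x_0,y_0)=0$, whereas the accumulating rectangles carry values of $f'_x,f'_y$ of oscillation $\gtrsim\delta$; thus neither derivative has a limit at $(x_0,y_0)$, and $E=K\times K$ is closed of measure $\mu(K)^2>0$. I would emphasize that this is not in conflict with $\sum_j\mu(I_j\times J_j)=\sum_j r_j^2<\infty$: the discontinuity set is the set of \emph{accumulation points} of the family $\{I_j\times J_j\}$, namely $K\times K$, and not the measure-zero set of points lying in infinitely many rectangles. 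The main obstacle is exactly this reconciliation of the three requirements at once, the family must be sparse enough (each gap used once) to force separate smoothness and the uniform $L^1$ bound, yet rich enough to accumulate on the full positive-measure $K\times K$, and the scale-separated assignment is what makes both possible. As a final check, on each rectangle $(f''_{xx})^2\sim(\delta/r_j)^2$, so $\int_0^1(f''_{xx}(x,y))^2\,dx\to\infty$ as $r_j\to0$; this is precisely the failure of the hypothesis of Corollary \ref{pr:3.3} that the example is meant to exhibit.
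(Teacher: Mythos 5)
Your construction is essentially the paper's own: a fat Cantor set $C$, product bumps of amplitude proportional to the (common) side length supported on pairwise disjoint pairs of equal-length complementary gaps, chosen so that the rectangles accumulate on all of $E=C\times C$ while every horizontal or vertical line meets at most one of them; the only difference is bookkeeping (your hierarchical stage-by-stage assignment versus the paper's inductive choice driven by a bijection $\mathbb N\to\mathbb N^3$). The argument is correct, including the normalization that makes the slice integrals of $|f''_{xx}|$ uniformly bounded while $f'_x$ oscillates by a fixed amount on the accumulating rectangles.
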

\begin{proof}
Let
$$C=[0,1]\setminus \bigsqcup_{n=1}^{\infty}\bigsqcup_{k=1}^{2^{n-1}}(a_{n,k},b_{n,k})$$
be a Cantor type set of positive measure such that

$(1)$ $0<a_{n,k}<b_{n,k}<1$ for every $n$ and $k$;

$(2)$ $a_{n,k}\ne b_{m,l}$ for every $n,m\in\mathbb N$, $k\leq 2^{n-1}$ and $l\leq 2^{m-1}$;

$(3)$ $b_{n,k}-a_{n,k}=b_{n,l}-a_{n,l}$ for every $n\in\mathbb N$ and $k,l\leq 2^{n-1}$.

Let $\{a_{n,k}, b_{n,k}: n\in\mathbb N, 1\leq k\leq 2^{n-1}\}=\{p_n:n\in\mathbb N\}$, $\varphi:\mathbb N\to\mathbb N^3$ be a bijection. Inductively for $n$  we choose
a sequence $(W_n)_{n=1}^{\infty}$ of rectangle $W_n=U_n\times V_n$ such that

$(a)$ $U_n=(a_n,b_n), V_n=(c_n,d_n)\in \{(a_{m,k},b_{m,k}):m\in\mathbb N, 1\leq k\leq 2^{m-1}\}$;

$(b)$ $U_n\cap U_m=V_n\cap V_m=\emptyset$ for all distinct $n,m\in\mathbb N$;

$(c)$ $b_n-a_n=d_n-c_n$ for all $n\in\mathbb N$;

$(d)$ $W_n\subseteq \{(x,y)\in\mathbb R^2:\max\{|x-c_k|,|y-c_m|\}\leq\frac{1}{l}\}$ for every $n\in\mathbb N$, where $(k,m,l)=\varphi(n)$.

Note that $E=C^2\subseteq \overline{\{w_n:n\in\mathbb N\}}$ for every sequence $(w_n)_{n=1}^{\infty}$ of points $w_n\in W_n$.

Let $\psi:\mathbb R\to\mathbb{R}^+$
be an arbitrary infinitely differentiable function with
$\mathrm{supp}\,\psi(y)=(0,1)$ and $\max_{x\in[0,1]}|\psi(x)|=1$. For every $n\in\mathbb N$ we put $\varepsilon_n=b_n-a_n=d_n-c_n$,
$$\varphi_n(x)=\psi\left(\frac{x-a_n}{\varepsilon_n}\right)$$
and
$$\psi_n(y)=\psi\left(\frac{y-c_n}{\varepsilon_n}\right).$$

Consider the function
$f:[0,1]^2\to\mathbb{R}$,
$$f(x,y)=\sum\nolimits_{n=1}^{\infty}\varepsilon_n\varphi_n(x)\psi_n(y).$$

It follows from $(b)$ that $f$ is separately infinitely differentiable. Clearly,
$$\sup_{y\in [0,1]}\int_0^1 |f''_{xx}(x,y)|dx=\sup_{x\in [0,1]}\int_0^1 |f''_{yy}(x,y)|dy=\int_0^1 |\psi''(x)|dx.$$
Thus $f$ satisfies the condition $(ii)$.

We show that $f$ satisfies the condition $(iii)$. For every $n\in\mathbb N$ we choose $u_n\in U_n$ and $v_n\in V_n$ such that
$$\varphi_n(u_n)=A:=\max_{x\in[0,1]}|\psi'(x)|\quad{\mbox and}\quad |\psi_n(v_n)|=1.$$
Therefore $|f'_{x}(u_n,v_n)|=A$ for every $n\in\mathbb N$. Since $f'_{x}(z)=0$ for every $z\in E$ and $E=\subseteq \overline{\{(u_n,v_n):n\in\mathbb N\}}$,
$f'_{x}$ is jointly discontinuous at every point of $E$.

Analogously $f'_{y}$ is jointly discontinuous at every point of $E$.
\end{proof}

The following modification of the example from \cite[Theorem 3.2]{MP}
shows that the assumptions of the existence of $f''_{yy}$ and
$f''_{xx}$ everywhere on the rectangle $[0,2\pi]^2$ in Theorem \ref{th:4.51} cannot be weakened.

\begin{thm} \label{th:4.6}
There exists a function $f:[0,1]^2\to\mathbb R$ satisfying the following conditions:
\begin{enumerate}
\item[$(i)$] $f$ has continuous partial derivative $f''_{yy}$;
\item[$(ii)$] for every $y\in [0,1]$ there exists a finite set $A(y)$ such that
$f''_{xx}(x,y)=0$ for all $x\in [0,1]\setminus A(y)$;
\item[$(iii)$] the set $\bigcup_{y\in[0,1]}A(y)$ is countable;
\item[$(iv)$] $f'_x$ is discontinuous with respect to $y$
at every point of some set $E$ of positive measure, in particular $f''_{xy}$
does not exist at all points of $E$.
\end{enumerate}
\end{thm}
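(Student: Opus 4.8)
The plan is to build $f$ directly as a single series
$$f(x,y)=\sum\nolimits_{n}g_n(x)\psi_n(y),$$
where the $\psi_n$ are fixed smooth bumps whose supports are the complementary intervals (gaps) $V_n=(c_n,d_n)$, of lengths $s_n=d_n-c_n$, of a fat Cantor set $C\subseteq[0,1]$ (so $\mu(C)>0$) placed on the $y$-axis, and each $g_n$ is a piecewise linear ``sawtooth'' in $x$ with slopes $\pm1$ but very small amplitude $\alpha_n=\|g_n\|_\infty$. Because the $V_n$ are pairwise disjoint and each $\psi_n$ is compactly supported in $V_n$, at most one summand is nonzero at any point: for $y\in V_n$ one has $f(\cdot,y)=\psi_n(y)g_n(\cdot)$, and for $y\in C$ one has $f(\cdot,y)\equiv0$. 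The decisive idea is that a sawtooth decouples its slope (kept of order one, to make $f'_x$ oscillate) from its amplitude (made tiny, to keep the $y$-derivatives small); this is precisely what is needed to reconcile a continuous $f''_{yy}$ with a discontinuous $f'_x$, which the naive product construction cannot achieve.

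First I would read off $(ii)$ and $(iii)$. For fixed $y$ the function $f(\cdot,y)$ is a scalar multiple of one sawtooth $g_n$ (or is zero), hence piecewise linear, so $f''_{xx}(\cdot,y)=0$ off the finite set $A(y)$ of corners of $g_n$ (with $A(y)=\varnothing$ when $y\in C$); the union $\bigcup_y A(y)=\bigcup_n\{\text{corners of }g_n\}$ is countable. For $(i)$ I would estimate $|f''_{yy}(x,y)|=|g_n(x)\psi_n''(y)|\le\alpha_n\|\psi''\|_\infty s_n^{-2}$ on $V_n$, and choose the amplitudes $\alpha_n\downarrow0$ so fast (faster than any fixed power of $s_n$ and than the distances from $V_n$ to the accumulation points of the gaps) that every $y$-derivative of $f$ extends continuously by $0$ across $C$.

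Next I would verify $(iv)$ with $E=A\times C$, where $A=[0,1]\setminus\bigcup_n\{\text{corners of }g_n\}$ has $\mu(A)=1$, so $\mu(E)>0$. For $x_0\in A$ the single-term structure gives $f'_x(x_0,y)=g_n'(x_0)\psi_n(y)$ on $V_n$, with $|g_n'(x_0)|=1$. Fix $(x_0,y_0)\in E$; since $C$ is nowhere dense, there are gaps $V_{n_k}$ with centers tending to $y_0$, and at these centers $\psi_{n_k}=1$, so $f'_x(x_0,\cdot)$ takes values of modulus $1$ at points tending to $y_0$, whereas $f'_x(x_0,y_0)=0$. Hence $f'_x(x_0,\cdot)$ is discontinuous at $y_0$ and $f''_{xy}(x_0,y_0)$ cannot exist, for every $(x_0,y_0)\in E$.

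The routine part is the piecewise-linear bookkeeping in $x$; the genuine obstacle is $(i)$, namely making $f$ truly twice (indeed infinitely) differentiable in $y$ with $f''_{yy}$ continuous at the points of $C$. This is a flatness argument: although each bump is smooth, one must control $\partial_y^k f=g_n\psi_n^{(k)}$ uniformly as the gaps accumulate, which is what forces the quantitative choice of $\alpha_n$ above and which requires checking the difference quotients at the points of $C$ that are one-sided limits of gaps. I would isolate this estimate as the main lemma and devote the bulk of the write-up to it.
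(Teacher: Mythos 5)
Your construction is essentially identical to the paper's: a fat Cantor-type set on the $y$-axis, smooth bumps $\psi_n$ supported on its complementary gaps, unit-slope sawtooths of small amplitude in $x$, with the amplitudes decaying fast enough relative to the squared gap lengths (the paper's condition is $\varepsilon_n/(b_n-a_n)^2\to 0$) to keep $f''_{yy}$ continuous, and the same exceptional set $E=\bigl([0,1]\setminus\bigcup_n A_n\bigr)\times C$. The argument is correct and follows the same route as the paper, which presents it as a modification of the example in \cite[Theorem 3.2]{MP}.
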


\begin{proof}
We construct a function $f$ similarly as in the proof of Theorem 3.2 from \cite{MP}, modifying the functions $\varphi_n$ only.

Let $B\subset [0,1]$ be a closed set without isolated points with
$\mu(B)>0$, whose complement $[0,1]\setminus B$ is dense in $[0,1]$ and $$[0,1]\setminus B=\bigsqcup\limits_{n=1}^{\infty}(a_n,b_n).$$  Let $\psi:\mathbb R\to\mathbb{R}^+$
be an arbitrary twice differentiable function with
$\mathrm{supp}\,\psi(y)=(0,1)$, $$\psi_n(y)=\psi\left(\frac{y-a_n}{b_n-a_n}\right)\,,
\;\;n=1,2,\dots$$
$\varepsilon_n>0$ so that
$\lim\limits_{n\to\infty}\frac{\varepsilon_n}{(b_n-a_n)^2}=0$.

We choose continuous functions $\varphi_n:[0,1]\to [0,\varepsilon_n]$ so that $|\varphi'_n(x)|=1$ for all $x\in [0,1]\setminus A_n$,
where $A_n$ is some finite set.

The function $f:[0,1]^2\to\mathbb{R}$,
$$f(x,y)=\sum\nolimits_{n=1}^{\infty}\varphi_n(x)\psi_n(y),$$

satisfies conditions $(i)-(iii)$ and condition $(iv)$ for $$E=\left([0,1]\setminus\bigcup_{n=1}^\infty A_n\right)\times B.$$
\end{proof}

In connection with this example and Theorem \ref{th:4.51}, the following question naturally
arises.

\begin{prob}\label{prob:5.4}
Let $f:[0,1]^2\to\mathbb R$ be a separately twice differentiable function and
its second derivatives $f''_{xx}$ and $f''_{yy}$ be integrable.
\begin{enumerate}
\item[$(i)$] Do there exist a.e. the mixed derivatives $f''_{xy}$ and $f''_{yx}$?
\item[$(ii)$] Does there exist a set $A\subseteq[0,1]$ with $\mu(A)=1$ such
that $f'_x$ is continuous with respect to $y$ in each point
of $A\times[0,1]$?
\item[$(iii)$] Is the function $f$ jointly continuous?
\end{enumerate}
\end{prob}

It follows from the proof of Theorem \ref{th:4.51} that the conditions of square integrability of $f''_{xx}$ and $f''_{yy}$ can be replaced by the
integrability of $f''_{xx}$ and $f''_{yy}$ and by the existence of an integrable function $h:[0,2\pi]^2\to\mathbb R$ with
$$h\sim-\sum\nolimits_{n,m\in\mathbb Z}m n a_{nm}e^{inx}e^{imy}.$$
In this connection the following question naturally
arises.

\begin{prob}\label{prob:5.5}
Let $f:[0,2\pi]^2\to\mathbb R$ be a separately twice differentiable function  with $f(x,y)=0$
on an open set in $[0,2\pi]^2$ which contains the boundary of $[0,2\pi]^2$ and
its second derivatives $f''_{xx}$ and $f''_{yy}$ are integrable. Let
$$f\sim\sum\nolimits_{n,m\in\mathbb Z}a_{nm}e^{inx}e^{imy}.$$
Does there exist an integrable function $h:[0,2\pi]^2\to\mathbb R$ with
$$h\sim-\sum\nolimits_{n,m\in\mathbb Z}m n a_{nm}e^{inx}e^{imy}?$$
\end{prob}

\end{document}